\documentclass[oneside, a4paper]{amsart}

\usepackage{config}
\usepackage{makros}

\title{Jordan-Chevalley decompositions over imperfect fields}

\date{}

\author{Fabian Hebestreit}
\address{Fabian Hebestreit: Fakultät für Mathematik, Universität Bielefeld, D-33501 Bielefeld}
\email{hebestreit@math.uni-bielefeld.de}

\author{Manuel Hoff}
\address{Manuel Hoff: Fakultät für Mathematik, Universität Bielefeld, D-33501 Bielefeld}
\email{manuel.hoff@uni-bielefeld.de}

\author{Werner Hoffmann}
\address{Werner Hoffmann: Fakultät für Mathematik, Universität Bielefeld, D-33501 Bielefeld}
\email{hoffmann@math.uni-bielefeld.de}

\begin{document}

\begin{abstract}
    We give a classification of Jordan-Chevalley decompositions of an endomorphism of a finite-dimensional vector space over a not necessarily perfect field, i.e.\ additive decompositions into commuting semisimple and nilpotent endomorphisms.
\end{abstract}

\maketitle

\tableofcontents

\section{Introduction} \label{sec:intro}

\subsection{Motivation} \label{subsec:motivation}

Let $K$ be a field and let $x \colon V \to V$ be an endomorphism of a finite-dimensional $K$-vector space $V$.
If the minimal polynomial $\minpol_x \in K[T]$ splits into linear factors then $x$ admits a Jordan normal form.
Decomposing the representing matrix into its diagonal and off-diagonal parts yields a decomposition $x = s + n$, where $s \colon V \to V$ is diagonalisable, $n \colon V \to V$ is nilpotent and $s \circ n = n \circ s$.
And while neither the basis for a Jordan form is unique, nor the Jordan form independent of the choice of basis, the decomposition into commuting diagonalisable and nilpotent endomorphisms \emph{does} turn out to be unique.

When $\minpol_x$ splits into linear factors only in $M[T]$ for some finite Galois extension $M/K$ then $x$ does not admit a Jordan normal form anymore.
Still, the base change $\id \otimes x \colon M \otimes_K V \to M \otimes_K V$ does so and the resulting decomposition of $\id \otimes x$ is invariant under the Galois action by uniqueness and thus descends to a decomposition $x = s + n$.
The endomorphism $n$ inherits nilpotence from its base change, but of course $s$ need no longer be diagonalisable.
It does, however, retain the feature that letting the variable $S$ act on $V$ through $s$ turns $V$ into a semisimple $K[S]$-module; this property is equivalent to the minimal polynomial $\minpol_s$ being square-free.

It is customary to refer to a decomposition $x = s + n$ with commuting $s$ and $n$, $s$ semisimple in the sense above and $n$ nilpotent as a \emph{Jordan-Chevalley decomposition}, since it was Chevalley who proved (essentially) the following result:

\begin{theoremno}[{\cite{chevalley}}]
    Let $K$ be a field and let $x \colon V \to V$ be an endomorphism of a finite-dimensional $K$-vector space $V$. 

    If every irreducible factor of $\minpol_x$ is separable, then the endomorphism $x$ admits a unique Jordan-Chevalley decomposition.
    Furthermore, for every field extension $M/K$ this decomposition remains a Jordan-Chevalley decomposition (over $M$) after extension of scalars to $\id \otimes x \colon M \otimes_K V \to M \otimes_K V$. 

    Conversely, if $x$ admits a Jordan-Chevalley decomposition that is stable under base change in this sense, then every irreducible factor of $\minpol_x$ is separable.
\end{theoremno}

An endomorphism $s \colon V \to V$ with the property that every base change $\id \otimes s \colon M \otimes_K V \to M \otimes_K V$ is semisimple over $M$ for every field extension $M/K$ is called \emph{absolutely} or \emph{geometrically} semisimple by some authors (though we warn the reader that there also exists the convention that this concept is itself referred to as semisimplicity; we will never do so).
The characterisation of semisimplicity in terms of the minimal polynomial makes it clear that it is the separability of (the irreducible factors of) the minimal polynomial which distinguishes plain from absolute semisimplicity.

At any rate, Chevalley's theorem settles the existence and uniqueness question for Jordan-Chevalley decompositions entirely for perfect fields, e.g.\ finite fields and all fields of characteristic $0$.

\medskip

In the present note we want to record what happens in the remaining cases, for which we have not found any reference in the literature (though it is hard to imagine that our results are not yet known).

That the behaviour is more complicated is easily illustrated by the following well-known examples:
If $f \in K[T]$ is a monic irreducible polynomial, then letting $\rmC_f$ denote the companion matrix of $f$, i.e.\ the matrix representing the multiplication by $T$ with respect to the standard basis of $K[T]/f(T)$, consider the matrices
\[
    A \, = \,
    \begin{pmatrix}
        \rmC_f & 0 \\ 
        0 & \rmC_f
    \end{pmatrix}
    \, , \;
    B \, = \, \rmC_{f^2} \;
    \in \Mat_{2 \deg f}(K) \, .
\]
The matrix $A$ has the obvious Jordan-Chevalley decomposition $A = A + 0$ but if $f$ is inseparable then for any $0 \neq E \in \Mat_{\deg f}(K)$ commuting with $\rmC_f$
\[
    A \, = \,
    \begin{pmatrix}
        \rmC_f & E \\
        0 & \rmC_f
    \end{pmatrix}
    +
    \begin{pmatrix}
        0 & - E \\
        0 & 0
    \end{pmatrix}
\]
is a Jordan-Chevalley decomposition as well, because 
\[
    f
    \begin{pmatrix}
        \rmC_f & E \\
        0 & \rmC_f
    \end{pmatrix}
    \, = \,
    \begin{pmatrix}
        f(\rmC_f) & f'(\rmC_f) \cdot E \\
        0 & f(\rmC_f)
    \end{pmatrix}
    \, = \,
    0
\]
as $f' = 0$ by inseparability, which implies that the minimal polynomial of the left-hand matrix is still $f$.
On the other hand, though this takes a bit more set-up to deduce, $B$ does not admit any Jordan-Chevalley decomposition (still assuming $f$ to be inseparable).

One can therefore ask how to determine whether a given endomorphism $x$ admits a Jordan-Chevalley decomposition at all, and if so, how many, both in total and up to conjugations that fix $x$.

\subsection{Results} \label{subsec:results}

To state our results first note that the problem splits along the primary decomposition of the action of $x$ on $V$, i.e.\ any Jordan-Chevalley decomposition of $x$ respects the direct sum decomposition of $V$ into its $f$-primary subspaces with respect to $x$, for varying monic irreducible $f \in K[T]$, since the constituent endomorphisms $s$ and $n$ of a Jordan-Chevalley decomposition commute with $x$ by assumption.
Furthermore, $s$ and $n$ evidently become a Jordan-Chevalley decomposition on each primary space, and conversely Jordan-Chevalley decompositions on all primary subspaces clearly assemble to one on $V$.
Without loosing generality we will therefore only treat the case of $f$-primary endomorphisms for some fixed monic irreducible polynomial $f \in K[T]$, i.e.\ of those endomorphisms whose minimal polynomial is a power of $f$.

\medskip

We shall phrase our existence result in terms of the Frobenius (or rational) normal form of an endomorphism $x \colon V \to V$, or equivalently its invariant factors.
To this end, recall that there is a unique representing matrix for $x$, which is a block matrix, in which the blocks are companion matrices for a (descending) divisor chain of monic polynomials.
This matrix is the Frobenius normal form of $x$ and the occuring polynomials are its invariant factors, the first one being the minimal polynomial.
Under our standing assumption that the minimal polynomial of $x$ is a power of $f$, it thus follows that all invariant factors are powers of $f$, so we can reconstruct them simply from the partition of $\frac{\dim_K V}{\deg f}$ given by their multiplicities.
Here, by a partition of a natural number we mean an unordered one, and it will be convenient to identifiy partitions of $m$ with decreasing sequences of natural numbers that sum to $m$ (and thus in particular vanish for all but finitely many entries); it will also turn out convenient to let the indices of such sequences start at $1$.
We thus denote by $\Part$ the set of (not necessarily strictly) decreasing maps $\varphi \colon \NN_{> 0} \to \NN$ that eventually reach $0$ and by $\Part_m$ the subset consisting of those that add up to $m$.
We will write a partition $\varphi \in \Part$ also as $[\varphi_1, \dotsc, \varphi_r]$ if $\varphi_i = 0$ for $i > r$.
Given $x \colon V \to V$ as above we denote by $\inv x \in \Part_m$ the partition encoding the multiplicities of $f$ in the invariant factors of $x$, where we write $m = \frac{\dim_K V}{\deg f}$.
We thus have a map
\[
    \inv \colon \set[\big]{x \colon V \to V}{\text{$x$ $f$-primary}} \to \Part_m
\]
that becomes a bijection after taking orbits for the $\Aut_K(V)$-action by conjugation on the left.
For example, the matrices $A, B \in \Mat_{2 \deg f}(K)$ considered above satisfy $\inv A = [1, 1]$ and $\inv B = [2]$ and this clearly constitutes an exhaustive list in the case $m = 2$.
Our first result now reads as follows:

\begin{theoremintro} \label{thm:existence}
    Let $K$ be a field, $V$ a finite-dimensional $K$-vector space and $x \colon V \to V$ an $f$-primary endomorphism of $V$ for some monic irreducible polynomial $f \in K[T]$ of inseparability degree $q \in \NN_{> 0}$.
    Then $x$ admits a Jordan-Chevalley decomposition if and only if we have
    \[
        \inv_{(i - 1)q + 1} x \, \leq \, 1 + \inv_{i q} x
    \]
    for all $i > 0$. 
\end{theoremintro}

It is perhaps suprising that the existence of a Jordan-Chevalley decomposition depends on the polynomial $f$ only through its inseparability degree, but this phenomenon will indeed persist through all of our results.
Note also that the numerical criterion is trivially satisfied if the inseparability degree is $1$, i.e.\ if $f$ is separable. 

In the examples, $A$ evidently satisfies the criterion, but $B$ does not (unless $q = 1$).
For illustration, let us also discuss the case $m = 3$.
We use this opportunity to introduce a bit of notation.
Given a partition $\psi$ we shall write
\[
    \rmC_{f, \psi} = \diag(\rmC_{f^{\psi_1}}, \rmC_{f^{\psi_2}}, \dotsc) \, .
\]
Then $\rmC_{f, \psi}$ represents multiplication by $T$ on $K[T]/f(T)^{\psi_1} \oplus K[T]/f(T)^{\psi_2} \oplus \dotsb$ with respect to the standard basis and hence it is the unique matrix in Frobenius normal form with $\inv \rmC_{f, \psi} = \psi$.
In this notation $A = \rmC_{f, [1, 1]}$ and $B = \rmC_{f, [2]}$ and in the case $m = 3$ there are three possible Frobenius normal forms that are given by
\[
    \rmC_{f, [1, 1, 1]} \, = \,
    \begin{pmatrix}
        \rmC_f & 0 & 0 \\
        0 & \rmC_f & 0 \\
        0 & 0 & \rmC_f
    \end{pmatrix} \, ,
    \quad
    \rmC_{f, [2, 1]} \, = \,
    \begin{pmatrix}
        \rmC_{f^2}
        & 0 
        \\
        0 
        & \rmC_f  
    \end{pmatrix}
    \quad \text{and} \quad
    \rmC_{f, [3]} \, = \,
    \rmC_{f^3} \, .
\]
For $q = 1$ all three of these admit Jordan-Chevalley decompositions while for $q = 2$ only the first two do and for $q \geq 3$ only the first does.

\medskip

With the existence question settled, let us turn to uniqueness.
We saw in the example of the matrix $A$ above that Jordan-Chevalley decompositions may come in different types.
We will now codify these types, again using invariant factors.
Namely, given a Jordan-Chevalley decomposition $x = s + n$ of the $f$-primary endomorphism $x$, we observe that $f = \minpol_s$:
Expanding terms, one readily checks that for any polynomial $g \in K[T]$ that annihilates $x$ the endomorphism $g(s) = g(x-n)$ is a multiple of $n$, whence the minimal polynomial of $s$ divides that of $x$.
As it is also square-free by semisimplicity the claim follows. 
Thus, denoting by $L = K[S]/f(S)$ the field obtained by adjoining a root of $f$ to $K$, we obtain an $L$-vector space structure on $V$ by letting $S$ act through $s$, and since $s$ and $n$ commute, $n$ is then not just $K$- but also $L$-linear.
We denote by $\typ(s,n) \in \Part$ the partition encoding the multiplicities of the invariant factors of $n$ over $L$.
In fact, we have $\typ(s,n) \in \Part_m$ since the minimal polynomial of the nilpotent endomorphism $n$ is a power of the linear polynomial $T \in K[T]$ and $m = \frac{\dim_K V}{\deg f} = \frac{\dim_L V}{1}$.
Just like the partition induced by the invariant factors classifies endomorphisms up to conjugation, the type classifies Jordan-Chevalley decompositions up to conjugation in the sense that the map
\[
    \typ \colon \, \set[\Bigg]{s, n \colon V \to V}{\begin{gathered} \text{$s \circ n = n \circ s$,} \\ \text{$n$ nilpotent, $\minpol_s = f$} \end{gathered}} \, \longrightarrow \, \Part_m
\]
becomes bijective after taking orbits for the $\Aut_K(V)$-action by conjugation on the left.
Given a Jordan-Chevalley decomposition $x = s + n$ of type $\varphi$, then after choosing a Jordan $L$-basis of $V$ for $n$ and using the standard $K$-basis of $L$ we obtain a $K$-basis of $V$ such that $x$ is represented by the block matrix $\rmJ_{f, \varphi}$ with blocks of shape 
\[
    \NiceMatrixOptions{xdots/shorten=0.5em}
    \begin{pNiceMatrix}
        \rmC_f & \unitmatrix & & \\
        & \Ddots & \Ddots & \\
        & & \Ddots & \unitmatrix \\
        & & & \rmC_f
    \end{pNiceMatrix} \, ,
\]
the sizes and multiplicities of which are encoded by the partition $\varphi \in \Part_m$.
In the examples of Jordan-Chevalley decompositions of $A = \rmC_{f, [1, 1]}$ above (where we assumed $f$ to be inseparable) we have $\typ(A, 0) = [1, 1]$ and
\[
    \typ \roundbr*{
    \begin{pmatrix}
        \rmC_f & E \\
        0 & \rmC_f
    \end{pmatrix},
    \begin{pmatrix}
        0 & - E \\
        0 & 0
    \end{pmatrix}
    } \, = \, [2] \, .
\]

Now consider the diagram
\[
\begin{tikzcd}[column sep = large, row sep = large]
    \set[\Bigg]{s, n \colon V \to V}{\begin{gathered} \text{$s \circ n = n \circ s$,} \\ \text{$n$ nilpotent, $\minpol_s = f$} \end{gathered}}
    \ar[d, "{\substack{(s,n) \\ \text{\rotatebox[origin=c]{-90}{$\mapsto$}} \\s+n}}"]
    \ar[r,"\typ"]
    & \Part_m
    \ar[dashed,d,"\zeta_{f,m}"] \\
    \set[\Big]{x \colon V \to V}{\text{$x$ $f$-primary}}
    \ar[r,"\inv"]
    & \Part_m
\end{tikzcd}
\]
in which the right vertical map $\zeta_{f,m} \colon \Part_m \to \Part_m$ is induced by passing to $\Aut_K(V)$-orbits; as the notation suggests this map only depends on the isomorphism class of $V$, that is on $m$.
For a fixed $x$, extraction of types now induces a map
\[
    \typ \colon \, \curlybr[\big]{\text{Jordan-Chevalley decompositions $(s, n)$ of $x$}} \, \longrightarrow \, \zeta_{f, m}^{-1} \roundbr[\big]{\inv x}
\]
that again becomes a bijection after taking orbits for the $\Aut_K(V, x)$-action by conjugation on the left.
Our next task is thus to describe the map $\zeta_{f,m}$.

To this end recall that partitions can be spliced, resulting in maps $+ \colon \Part_m \times \Part_{m'} \to \Part_{m+m'}$ that give $\Part$ the structure of a commutative monoid; for example we have $[3, 1, 1] + [4, 2] = [4, 3, 2, 1, 1]$.
This splicing operation is compatible with taking direct sums of endomorphisms in the sense that 
\[
    \inv(x \oplus x') \, = \, \inv x + \inv x'
    \quad \text{and} \quad
    \typ(s\oplus s', n\oplus n') \, = \, \typ(s,n) + \typ(s',n') \, ,
\]
which implies that the map
\[
    \zeta_f \colon \, \Part \, \longrightarrow \, \Part \, ,
\]
given by $\zeta_{f, m}$ on $\Part_m$ for each $m$, is a homomorphism of commutative monoids.
Since $\Part$ is, as a commutative monoid, freely generated by the singleton partitions $[a]$ for $a \geq 1$, the homomorphism $\zeta_f$ is entirely determined by its values on these.
We have the following description:

\begin{theoremintro} \label{thm:uniqueness-type}
    Let $K$ be a field and $f \in K[T]$ a monic irreducible polynomial $f$ of inseparability degree $q \in \NN_{> 0}$.
    Then the homomorphism $\zeta_f \colon \Part \to \Part$ is given by
    \[
        [a] \, \longmapsto \, l \cdot [k + 1] + (q - l) \cdot [k] = \squarebr[\big]{\underbrace{k + 1, \dotsc, k+ 1}_l, \underbrace{k, \dotsc, k}_{q - l}}
    \]
    where $k, l \in \NN$ are the unique natural numbers with $a = k q + l$ and $l < q$.
\end{theoremintro}

In particular, $\zeta_f$ again depends on $f$ only through its inseparability degree.
For $q = 1$ the map $\zeta_f$ is the identity, or in other words the invariant factors of $x = s + n$ over $K$ and of $n$ over $L$ have the same multiplicities; the bijectivity of $\zeta_f$ in this case is of course implied by Chevalley's result.
For $q, m > 1$ the map $\zeta_{f, m} \colon \Part_m \to \Part_m$ is never injective, since for example the preimages of $m \cdot [1]$ are precisely those partitions $\varphi \in \Part_m$ with $\varphi_i \leq q$ for all $i$.
That is, if $f$ is inseparable then the matrix $\diag(\rmC_f, \dotsc, \rmC_f) \in \Mat_{m \deg f}(K)$ admits more than one type of Jordan-Chevalley decomposition for every $m > 1$.
In fact, it follows from \cref{thm:uniqueness-type} that if $x$ admits a Jordan-Chevalley decomposition then it admits more than one type thereof if and only if $q > 1$ and there exist $r \geq 0$ and $0 < i_1 < i_2$ such that we have
\[
    \inv_{(i_1 - 1)q + 1} x \, = \, \inv_{(i_1 - 1)q + 2} x \, = \, r + 1
    \quad \text{and} \quad
    \inv_{i_2 q - 1} x \, = \, \inv_{i_2 q} x \, = \, r \, ,
\]
see \cref{lem:zeta-one-preimage}.

Returning to the case $m = 3$, we find that for $q = 1$ all of $\rmC_{f, [1, 1, 1]}$, $\rmC_{f, [2, 1]}$ and $\rmC_{f, [3]}$ admit Jordan-Chevalley decomposition of a unique type each (as of course implied by Chevalley's theorem), for $q = 2$ we have 
\[
    \zeta_f \roundbr[\big]{[1, 1, 1]} \, = \, \zeta_f \roundbr[\big]{[2, 1]} \, = \, [1, 1, 1]
    \quad \text{and} \quad
    \zeta_f \roundbr[\big]{[3]} = [2, 1]
\]
so that in this case $\rmC_{f, [2, 1]}$ admits Jordan-Chevalley decompositions of a unique type, whereas those of $\rmC_{f, [1, 1, 1]}$ come in two types, and for $q \geq 3$ the map $\zeta_{f,3} \colon \Part_3 \to \Part_3$ is constant with value $[1, 1, 1]$ so that then $\rmC_{f, [1, 1, 1]}$ admits three types of Jordan-Chevalley decompositions.

\medskip

To complete the analysis, we turn to the uniqueness of Jordan-Chevalley decompositions within a given type.
To this end, we observe that one can view \cref{thm:uniqueness-type} as a classification of similarity types of the block matrices $\rmJ_{f, \varphi}$ introduced above:
Namely, an endomorphism $x$ admits a Jordan-Chevalley decomposition of type $\varphi$ if and only if it can be represented by the matrix $\rmJ_{f, \varphi}$.
Thus $\inv \rmJ_{f, \varphi} = \zeta_f(\varphi)$ and consequently two matrices $\rmJ_{f, \varphi}$ and $\rmJ_{f, \varphi'}$ are conjugate if and only if $\zeta_f(\varphi) = \zeta_f(\varphi')$.

Now, note that when $f$ is inseparable and $\varphi \neq m \cdot [1]$, then $\rmJ_{f, \varphi}$ admits not only the obvious Jordan-Chevalley decomposition of type $\varphi$ but in fact infinitely many:
As in the initial example of the matrix $A$, picking for every $i \geq 1$ such that $\varphi_i > 1$ a non-zero $E_i \in \Mat_{\deg f}(K)$ that commutes with $\rmC_f$ gives rise to a Jordan-Chevalley decomposition
\[
    \NiceMatrixOptions{xdots/shorten=0.5em}
    \rmJ_{f, [\varphi_i]} \, = \,
    \begin{pNiceMatrix}
        \rmC_f & & & E_i \\
        & \Ddots & & \\
        & & \Ddots & \\
        & & & \rmC_f
    \end{pNiceMatrix}
    \, + \,
    \begin{pNiceMatrix}
        0 & \unitmatrix & & -E_i \\
        & \Ddots & \Ddots & \\
        & & \Ddots & \unitmatrix \\
        & & & 0
    \end{pNiceMatrix}
\]
and these assemble into an additional Jordan-Chevalley decomposition of $\rmJ_{f, \varphi}$ that is still of type $\varphi$ (so long as $E_i \neq \unitmatrix$ whenever $\varphi_i = 2$).
Summarising, given an $f$-primary endomorphism $x$ with Jordan-Chevalley decomposition $x = s + n$, there are always infinitely many thereof of the same type unless $q = 1$ or $n = 0$.

To further quantify the Jordan-Chevalley decompositions of $x$ within a given type $\varphi \in \zeta_f^{-1}(\inv x)$ we note that they naturally carry the structure of an algebraic variety over $K$, and we calculate its dimension.
To make this precise, consider the functor
\[
    J(x, \varphi) \colon \, \catcalgs(K) \, \longrightarrow \, \catsets
\]
sending a commutative $K$-algebra $R$ to the set of decompositions $\id \otimes x = s + n$ in $\End_R(R \otimes_K V)$ for which there exists a faithfully flat ring homomorphism $R \to R'$ such that the pair $(\id \otimes s, \id \otimes n)$ is conjugate to $(\id \otimes s', \id \otimes n')$ in $\End_{R'}(R' \otimes_K V)$ for some (or equivalently any) Jordan-Chevalley decomposition $x = s' + n'$ in $\End_K(V)$ of type $\varphi$; in particular $J(x, \varphi)(K)$ is precisely the set of Jordan-Chevalley decompositions of $x$ of type $\varphi$.
We then have the following result:

\begin{theoremintro} \label{thm:uniqueness}
    Let $K$ be a field, $V$ a finite-dimensional $K$-vector space, $x \colon V \to V$ an $f$-primary endomorphism of $V$ for some monic irreducible polynomial $f \in K[T]$ of inseparability degree $q \in \NN_{> 0}$ and $\varphi \in \zeta_f^{-1}(\inv x)$.

    Then $J(x, \varphi)$ is representable by a quasi-affine, smooth and geometrically connected $K$-scheme whose set of $K$-points is dense and we have
    \[
        \dim J(x,\varphi) = \deg f \cdot \roundbr[\big]{\curlybr{\inv x} - \curlybr{\varphi}} \, ,
    \]
    where for a partition $\omega$ we write $\curlybr{\omega} \coloneqq \sum_i (2i - 1) \cdot \omega_i$.
\end{theoremintro}

In particular, this formula again implies the infinitude of the set of Jordan-Chevalley decompositions of $x$ outside the degenerate cases.

\subsection{Acknowledgments} \label{subsec:acknowledgments}

We would like to thank Lukas Kühne for helping with some of the combinatorics.

This work was funded by the Deutsche Forschungsgemeinschaft (DFG, German Research Foundation)
– Project-ID 491392403 – TRR 358.
\section{Proofs of the results}

We start by deducing the description of the homomorphism $\zeta_f \colon \Part \to \Part$ stated above:

\begin{proof}[Proof of \cref{thm:uniqueness-type}]
    Recall that we have defined the finite field extension $L = K[S]/f(S)$ of $K$.
    Let $a > 0$ and set $(V, x, s, n) \coloneqq (L[U]/U^a, S + U, S, U)$.
    Then $(s, n)$ is a Jordan-Chevalley decomposition of $x$ of type $[a]$ and therefore our goal is to compute $\inv x$.
    
    In the following we write $\mult_b(\psi)$ for the multiplicity of a number $b \geq 1$ in a partition $\psi \in \Part$.
    We then have the standard identity
    \[
        \mult_b(\inv x) \, = \, (k_b - k_{b - 1}) - (k_{b + 1} - k_b) \, ,
    \]
    where we write $k_i \coloneqq \frac{\dim_K \ker f^i(x)}{\deg f}$.
    Now note that by definition of the inseparability degree we have a decomposition $f = (T - S)^q \cdot g$ in $L[T]$ with $g(S) \neq 0$ in $L$.
    Thus, using the $L$-vector space structure on $V$, the $L$-linearity of $x$ and the observation that $g(x) \in \End_L(V)$ is invertible (because $x$ is $(T - S)$-primary as an $L$-linear endomorphism of $V$ and $g$ is coprime to $T - S$), we obtain
    \[
        k_i \, = \, \dim_L \ker f^i(x) \, = \, \dim_L \ker (x - S)^{i q} \, = \, \min \curlybr{i q, a}
    \]
    and consequently
    \[
        k_{i + 1} - k_i \, = \,
        \begin{cases}
            q & \text{if $i < k$} \, , \\
            l & \text{if $i = k$} \, , \\
            0 & \text{else}
        \end{cases}
        \qquad \text{and} \qquad
        \mult_b(\inv x) \, = \,
        \begin{cases}
            q - l & \text{if $b = k$} \, , \\
            l & \text{if $b = k + 1$} \, , \\
            0 & \text{else} \, ,
        \end{cases}
    \]
    where $a = k q + l$ as in the statement of the theorem.
\end{proof}

Next we give two combinatorial lemmas, the first of which in combination with \cref{thm:uniqueness-type} implies \cref{thm:existence}:

\begin{lemma} \label{lem:zeta-image}
    Let $q \in \NN_{> 0}$ and write $\zeta_q \colon \Part \to \Part$ for the homomorphism of commutative monoids that is given on generators by $[a] \mapsto l \cdot [k + 1] + (q - l) \cdot [k]$, where $k, l \in \NN$ are the unique natural numbers with $a = k q + l$ and $l < q$.
    Then a partition $\psi$ is contained in the image of $\zeta_q$ if and only if we have
    \[ \label{eq:zeta-image}
        \psi_{(i - 1) q + 1} \, \leq \, 1 + \psi_{i q}
        \tag{$\ast$}
    \]
    for all $i > 0$.
    In this case a preimage $\varphi \in \zeta_q^{-1}(\psi)$ is given by
    \[
        \varphi_i \, \coloneqq \, \sum_{j = (i - 1) q + 1}^{i q} \psi_j
    \]
    and in fact $\varphi$ is the unique preimage of $\psi$ such that for every $r \geq 0$ there exists at most one $i$ with the property that $r q < \varphi_i < (r + 1) q$.
\end{lemma}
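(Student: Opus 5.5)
A convenient reformulation: for a single part $a = kq + l$, the partition $\zeta_q([a])$ consists of $q$ parts whose sum is $a$ and which differ pairwise by at most one. Conversely, any $q$-tuple of naturals with all entries in $\{k, k+1\}$ and sum $a$ equals $\zeta_q([a])$ after sorting. So $\psi$ lies in the image exactly when its nonzero parts can be grouped, with zeros appended as padding, into blocks of size $q$ such that inside each block the maximum exceeds the minimum by at most one. The lemma then asks us to show that the consecutive grouping $\{(i-1)q+1,\dots,iq\}$ works precisely under (\ref{eq:zeta-image}), and to identify the resulting preimage.

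\emph{Sufficiency and the explicit preimage.} Assume (\ref{eq:zeta-image}) holds. Since $\psi$ is decreasing, the $i$-th consecutive block $\psi_{(i-1)q+1} \geq \dots \geq \psi_{iq}$ has all entries in $\{\psi_{iq}, \psi_{iq}+1\}$. Its sum is $\varphi_i$, so by the reformulation $\zeta_q([\varphi_i])$ is exactly this block. The $\varphi_i$ are decreasing because every entry of block $i$ dominates every entry of block $i+1$. By additivity, $\zeta_q(\varphi) = \sum_i \zeta_q([\varphi_i]) = \psi$.

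\emph{Necessity.} Let $\psi = \zeta_q(\varphi)$ for some decreasing $\varphi$, and put $k_i = \lfloor \varphi_i / q \rfloor$. Then $\psi$ is the sorted multiset union of the blocks $B_i$, where $B_i$ consists of $q$ entries from $\{k_i, k_i+1\}$. The $k_i$ are decreasing, so for $i < j$ each entry of $B_i$ is at least $k_i \geq k_j$ and each entry of $B_j$ is at most $k_j + 1 \leq k_i + 1$. Fix $i$ and suppose $\psi_{(i-1)q+1} \geq \psi_{iq} + 2 = v + 2$. Then the $iq - (i-1)q + 1 = q+1$ entries $\psi_{(i-1)q+1}, \dots, \psi_{iq}$ take values in $[v, w]$ with $w \geq v+2$, and each block has range at most one. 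I intend to finish with a counting argument, but it is fiddly. Let $c = \#\{j : \psi_j > v+1\} \leq (i-1)q$... (I have checked this bound only loosely, and the count may need $\geq$ in place of $\leq$). The alternative is an exchange argument: order the blocks by $k_i$ and observe that sorting the union of blocks of range $\leq 1$ with decreasing floors yields a sequence in which every window of $q$ consecutive positions starting at a multiple of $q$ plus one has range $\leq 1$. I expect proving this sorting claim cleanly to be \textbf{the main obstacle}. The tool for it is the pairwise comparability above: entries equal to $k_j$ or $k_j+1$ can interleave only between blocks with $k_i \in \{k_j - 1, k_j, k_j + 1\}$, so a direct index count on $\#\{t : \psi_t \geq u\}$ should do it. That count equals $\sum_i \#\{\text{entries of } B_i \geq u\}$. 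It is therefore $q$ times the number of blocks with $k_i \geq u$, plus the number of entries equal to $k_i+1$ in the blocks with $k_i = u-1$. From this one reads off that $\psi_{(i-1)q+1} - \psi_{iq} \leq 1$.

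\emph{Uniqueness.} The characterizing condition on $\varphi$ says that at most one $\varphi_i$ lies strictly between $rq$ and $(r+1)q$, i.e.\ at most one block of floor $r$ is mixed. Under this condition the sorted union puts the unique mixed block of floor $r$ between the pure blocks of value $r+1$ and those of value $r$. Hence the blocks occupy consecutive windows aligned at multiples of $q$, which forces $\varphi_i = \sum_{j=(i-1)q+1}^{iq} \psi_j$. Conversely, the explicit $\varphi$ constructed above has this property: two mixed blocks with the same floor $r$ would both contain $r$ and $r+1$, and sortedness of $\psi$ rules this out, since the earlier block would then be entirely $\geq$ the later one.
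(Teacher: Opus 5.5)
Your sufficiency argument and your uniqueness paragraph are correct. The concatenation step in the latter (at most one mixed block per floor forces the sorted union to be the blocks laid end to end) is exactly the step with which the paper concludes. The genuine gap is the necessity direction, which is the only non-trivial part of the lemma and which you never actually prove:
\begin{itemize}
\item Your first attempt is abandoned and is wrong as written. The window $\psi_{(i-1)q+1},\dots,\psi_{iq}$ has $q$ entries, not $q+1$. The bound $c\le (i-1)q$ is false, since $\psi_{(i-1)q+1}\ge v+2$ forces $c\ge (i-1)q+1$.
\item The ``exchange argument'' you then announce involves no exchange; it only restates the claim to be shown.
\item Your count of $\#\{t:\psi_t\ge u\}$ is computed correctly, but you stop at ``one reads off'', which is exactly where the content lies.
\end{itemize}

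The missing observation is that a block of floor $u$ contains at most $q-1$ entries equal to $u+1$, because $l_i<q$. Write $N(u)=\#\{t:\psi_t\ge u\}$ and $A(u)=\#\{i:k_i\ge u\}$. Your formula then gives
\[ N(u+1)\,\le\, q\,A(u+1)+(q-1)\cdot\#\{i:k_i=u\}\,\le\, q\,A(u)\,\le\, N(u). \]
Suppose $(\ast)$ failed for some $i$, and put $u=\psi_{iq}+1$. Then $\psi_{(i-1)q+1}\ge u+1$ and $\psi_{iq}<u$ give $(i-1)q<N(u+1)\le N(u)<iq$. So the multiple $qA(u)$ of $q$ would lie strictly between $(i-1)q$ and $iq$, which is absurd. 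Completed this way, your route handles an arbitrary preimage directly.

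The paper instead normalises the preimage. It picks $\varphi$ minimising the number of $i$ with $q\nmid\varphi_i$. It then replaces two mixed parts $\varphi_{i_1},\varphi_{i_2}$ of the same floor $r$ by $[rq]+[\varphi_{i_1}+\varphi_{i_2}-rq]$ or by $[(r+1)q]+[\varphi_{i_1}+\varphi_{i_2}-(r+1)q]$. This has the same image, since $2q$ entries from $\{r,r+1\}$ with fixed sum are determined, so the minimal preimage has at most one mixed block per floor. After that, your own concatenation argument yields $(\ast)$. Either this normalisation or the inequality above is what your proposal lacks.
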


\begin{proof}
    First assume that $\psi$ satisfies the condition \eqref{eq:zeta-image} and define $\varphi$ as in the statement of the theorem; note that the monotony of $\psi$ implies the monotony of $\varphi$.
    Then we see that $\zeta_q([\varphi_i]) = [\psi_{i q + 1}, \dotsc, \psi_{(i + 1) q}]$ and thus $\zeta_q(\varphi) = \psi$ as desired.

    Now conversely assume that $\psi$ is in the image of $\zeta_q$ and choose a preimage $\varphi$ such that $\abs{\set{i \geq 1}{q \nmid \varphi_i}}$ is minimal.
    For every $r \geq 0$ there is then at most one $i$ such that $r q < \varphi_i < (r + 1) q$:
    If there existed $i_1 \neq i_2$ with $r q < \varphi_{i_1}, \varphi_{i_2} < (r + 1)q$, then we would have
    \[
        rq \, < \, \varphi_{i_1} + \varphi_{i_2} - r q \, \leq \, (r + 1) q
        \quad \text{or} \quad
        rq \, \leq \, \varphi_{i_1} + \varphi_{i_2} - (r + 1)q \, < \, (r + 1) q
    \]
    and consequently
    \[
        \zeta_q \roundbr[\big]{[\varphi_{i_1}] + [\varphi_{i_2}]} \, = \, \zeta_q \roundbr[\big]{[rq] + [\varphi_{i_1} + \varphi_{i_2} - r q]} \quad \text{or} \quad
        \zeta_q \roundbr[\big]{[\varphi_{i_1}] + [\varphi_{i_2}]} \, = \, \zeta_q \roundbr[\big]{[(r + 1) q] + [\varphi_{i_1} + \varphi_{i_2} - (r + 1) q]} \, ,
    \]
    yielding a preimage of $\psi$ that would contradict the minimality of $\abs{\set{i \geq 1}{q \nmid \varphi_i}}$.
    But then it follows that
    \[
        [\psi_1, \psi_2, \dotsc] \, = \, \squarebr[\big]{\zeta_q([\varphi_1])_1, \dotsc, \, \zeta_q([\varphi_1])_q, \, \zeta_q([\varphi_2])_1, \dotsc, \zeta_q([\varphi_2])_q, \dotsc}
    \]
    so that condition \eqref{eq:zeta-image} is satisfied.
\end{proof}

\begin{lemma} \label{lem:zeta-one-preimage}
    Let $q \in \NN_{> 1}$ and $\zeta_q \colon \Part \to \Part$ as in \cref{lem:zeta-image}.
    Then a partition $\psi \in \im \zeta_q$ has more than one preimage under $\zeta_q$ if and only if there exist $r \geq 0$ and $0 < i_1 < i_2$ such that we have
    \[ \label{eq:zeta-one-preimage}
        \psi_{(i_1 - 1)q + 1} \, = \, \psi_{(i_1 - 1)q + 2} = r + 1
        \quad \text{and} \quad
        \psi_{i_2 q - 1} \, = \, \psi_{i_2 q} = r \, .
        \tag{$\ast \ast$}
    \]
\end{lemma}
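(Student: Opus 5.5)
Both directions will go through the canonical preimage $\varphi$ from \cref{lem:zeta-image}. That $\varphi$ is characterised as the unique preimage of $\psi$ in which, for every $r \geq 0$, at most one part lies strictly between $rq$ and $(r+1)q$. Moreover $\zeta_q([\varphi_i])$ is exactly the block $[\psi_{(i-1)q+1}, \dotsc, \psi_{iq}]$. So $\psi$ has another preimage if and only if $\varphi$ can be modified to a different partition with the same image.

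The basic move is the following. Given two parts $a, b$, the value $\zeta_q([a] + [b])$ depends only on the multiset of all $2q$ entries, i.e.\ on $a+b$ together with how the residues distribute. Concretely, $\zeta_q([a]+[b]) = \zeta_q([a']+[b'])$ whenever $a+b = a'+b'$ and all four numbers lie in the same window $[rq, (r+1)q]$. In that situation every entry of the image equals $r$ or $r+1$, and the number of entries equal to $r+1$ is determined by the sum.

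\emph{($\Leftarrow$)} Assume \eqref{eq:zeta-one-preimage}. The block $i_1$ of $\psi$ starts with $r+1, r+1$, so $\varphi_{i_1} \geq rq + 2$. The block $i_2$ ends with $r, r$, so $\varphi_{i_2} \leq (r+1)q - 2$. By monotonicity of $\psi$ and since $i_1 < i_2$, we also have $\varphi_{i_1} \leq (r+1)q$ and $\varphi_{i_2} \geq rq$. I will split $\varphi_{i_1}, \varphi_{i_2}$ differently within the window: replace them by $\varphi_{i_1} - 1$ and $\varphi_{i_2} + 1$. This keeps both values in $[rq, (r+1)q]$ and changes the pair unless $\varphi_{i_1} - 1 = \varphi_{i_2}$. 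In that exceptional case I will instead replace them by $\varphi_{i_1} + \varphi_{i_2} - rq$ and $rq$. By the basic move the image is unchanged in either case. I then need to check that the new multiset of parts differs from the old one. In the exceptional case, $\varphi_{i_2} + 1 \leq (r+1)q - 1$ shows that the new part $\varphi_{i_1} + \varphi_{i_2} - rq = \varphi_{i_2} + (\varphi_{i_1} - rq)$ is not equal to either old part unless $\varphi_{i_2} = rq$. If $\varphi_{i_2} = rq$, then $\varphi_{i_1} = rq+1$, and this contradicts $\varphi_{i_1} \geq rq+2$. Since the modification is local, the resulting partition is a genuinely different preimage.

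\emph{($\Rightarrow$)} Suppose $\varphi' \neq \varphi$ is another preimage. By the uniqueness clause of \cref{lem:zeta-image}, $\varphi'$ has two parts $a \geq b$ with $rq < a, b < (r+1)q$ for some $r$. Then $\zeta_q([a])$ starts with at least one entry $r+1$ and ends with entries $r$, and $\zeta_q([b])$ likewise. The task is to show that in the sorted $\psi$ there is a block, namely the canonical grouping into consecutive $q$-tuples, beginning with two entries $r+1$ and a later block ending with two entries $r$. Let $N_{r+1}$ and $N_r$ be the total counts of the values $r+1$ and $r$ in $\psi$. The parts $a$ and $b$ each contribute at least one $r+1$ and one $r$, so $N_{r+1} \geq 2$ and $N_r \geq 2$.

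In the canonical grouping, the entries equal to $r+1$ occupy a contiguous range of positions, and so do the entries equal to $r$. By the characterisation in \cref{lem:zeta-image}, at most one canonical block is mixed, containing both values. I will argue from the counts:
\begin{itemize}
\item If no block is mixed and $N_{r+1} \geq 2$, then the last block of $(r+1)$'s begins with two entries $r+1$, and likewise the first block of $r$'s ends with two entries $r$.
\item If there is a mixed block, I split according to how many $r+1$'s and $r$'s it contains, using $N_{r+1}, N_r \geq 2$ to find a suitable block on each side.
\end{itemize}
The main obstacle is this case analysis when the mixed block has exactly one $r+1$ or exactly one $r$. In that case I will use the total counts to guarantee that a full block of $(r+1)$'s, or of $r$'s, exists on the other side of the mixed block. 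I will also use $q > 1$, which ensures that "two entries" fit inside a block.
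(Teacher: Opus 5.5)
Your ($\Leftarrow$) direction is essentially right, but ($\Rightarrow$) has a genuine gap. You plan to derive $(\ast\ast)$ from $N_{r+1}, N_r \geq 2$ and the shape of the canonical blocks alone, and this cannot work. These counts do not determine where the block boundaries fall relative to the run of $(r+1)$'s and the run of $r$'s, and those runs can be absorbed into blocks headed by $r+2$ or tailed by $r-1$.

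\emph{Counterexample to the counting plan.} Take $q=3$ and $\psi=[3,2,2,2,1,1,1]$, with blocks $(3,2,2),(2,1,1),(1,0,0)$, so $\psi$ lies in the image of $\zeta_3$. For $r=1$ you have $N_2=N_1=3$ and one mixed block containing a single $2$. Yet there is no full block of $2$'s, and no block begins with $2,2$. In fact $(\ast\ast)$ fails for every $r$, and $[7,4,1]$ is the only preimage.

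\emph{Further failures.} Your first bullet already fails for $q=3$, $\psi=[3,2,2,1,1,1]$, $r=1$: the only block containing $2$'s is $(3,2,2)$. For $q=4$, $\psi=[3,3,2,2,2,2,1,1,1]$, $r=1$, the mixed block $(2,2,1,1)$ has two of each value but only gives $i_1=i_2$. So the existence of $a,b$ has to be used through the grouping that the whole of $\varphi'$ induces on the entries of $\psi$.

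\emph{The missing step.} Let $h$ be the number of parts of $\varphi'$ exceeding $(r+1)q$, and $g$ the number of parts $\geq rq$. Then $g\geq h+2$, because $a,b$ count for $g$ but not for $h$.
\begin{itemize}
\item The images of the $h$ large parts consist of entries $\geq r+1$ and contain every entry $\geq r+2$ of $\psi$. Outside them, $a,b$ still use at least two entries $r+1$. With $P$ the number of entries $\geq r+2$, this gives $P\leq hq\leq P+N_{r+1}-2$, hence $\psi_{hq+1}=\psi_{hq+2}=r+1$.
\item Dually, the images of the $g$ parts $\geq rq$ are exactly all entries $\geq r+1$ plus some $r$'s, at least two of which come from $a,b$. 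So $P+N_{r+1}+2\leq gq\leq P+N_{r+1}+N_r$, and $\psi_{gq-1}=\psi_{gq}=r$.
\end{itemize}
Thus $i_1=h+1<i_2=g$ witnesses $(\ast\ast)$. This is exactly the paper's choice of $i_1$ minimal with $\varphi'_{i_1}\leq (r+1)q$ and $i_2$ maximal with $\varphi'_{i_2}\geq rq$.

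\emph{A smaller point in ($\Leftarrow$).} Your replacement in the exceptional case $\varphi_{i_1}=\varphi_{i_2}+1$ can leave the window. Your bounds allow $q=10$, $r=0$, $\varphi_{i_2}=7$, giving a new part $15>10$, so the basic move does not apply there. However, the case never occurs. Since $i_1<i_2$ and $\psi$ is decreasing, either block $i_1$ consists only of $r+1$'s or block $i_2$ consists only of $r$'s. Either way $\varphi_{i_1}-\varphi_{i_2}\geq 2$, as the paper observes, and your first modification suffices.
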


\begin{proof}
    First assume that there exist $r \geq 0$ and $0 < i_1 < i_2$ satisfying the conditions \eqref{eq:zeta-one-preimage} and assume without loss of generality that $i_1$ is chosen maximal and then that $i_2$ is chosen minimal with respect to the given $r$.
    Now let $\varphi \in \zeta_q^{-1}(\psi)$ be the standard preimage from \cref{lem:zeta-image}.
    Then the conditions \eqref{eq:zeta-one-preimage} translate precisely into the properties
    \[
        rq \, \leq \, \varphi_{i_1} \, , \,  \varphi_{i_2} \, \leq \, (r + 1)q
        \quad \text{and} \quad
        \varphi_{i_1} - \varphi_{i_2} \geq 2
    \]
    while our mild extra hypothesis guarantees that $\varphi_{i_2} < \varphi_i < \varphi_{i_1}$ for all $i_1 < i < i_2$ (in fact we always have $i_2 - i_1 = 1$ when $q > 2$ and $i_2 - i_1 \leq 2$ when $q = 2$).
    It now follows that the partition $\varphi'$ defined by
    \[
        \varphi'_i \coloneqq
        \begin{cases}
            \varphi_i & \text{if $i \neq i_1, i_2$} \, , \\
            \varphi_{i_1} - 1 & \text{if $i = i_1$} \, , \\
            \varphi_{i_2} + 1 & \text{if $i = i_2$}
        \end{cases}
    \]
    is another preimage of $\psi$ under $\zeta_q$.

    Now conversely assume that there exists a preimage $\varphi' \in \zeta_q^{-1}(\psi)$ different from the standard one $\varphi$ from \cref{lem:zeta-image}.
    Then there exists $r \geq 0$ such that $rq < \varphi_i < (r + 1)q$ for more than one $i$.
    Now let $i_1$ be minimal with $\varphi'_{i_1} \leq (r + 1)q$ and let $i_2$ be maximal with $\varphi'_{i_2} \geq rq$.
    Then by the algorithm in the proof of \cref{lem:zeta-image} we also have $rq \leq \varphi_{i_1}, \varphi_{i_2} \leq (r + 1)q$ and $\varphi_{i_1} - \varphi_{i_2} \geq 2$ as desired.
\end{proof}

Lastly we give the proof of our dimension formula for the $J(x, \varphi)$:

\begin{proof}[Proof of \cref{thm:uniqueness}]
    Let us write $D(x) \colon \catcalgs(K) \to \catsets$ for the functor sending $R$ to the set of decompositions $x = s + n$ in $\End_R(R \otimes_K V)$; this is an affine space over $K$, isomorphic to $\affspace(\End_K(V))$ via any of the projections $(s, n) \mapsto s, n$, and we have $J(x, \varphi) \subseteq D(x)$.

    Now recall that the centraliser $\End_K(V, x) \subseteq \End_K(V)$ is a $K$-subalgebra of dimension $\deg f \cdot \curlybr{\inv x}$, see for example \cite[Section VIII.2]{gantmacher}.
    Thus $\intAut_K(V, x)$ is a smooth connected affine $K$-group scheme of the same dimension (and its $K$-points are dense as soon as the field $K$ is infinite, see for example \cite{mo-rational-points-affine-space}).
    We have a natural action of $\intAut_K(V, x)$ on $D(x)$ and we observe that $J(x, \varphi)$ is the orbit of any of its $K$-points under this action, see \cite[Section 7.c]{milne}.
    From this it follows that $J(x, \varphi)$ is a smooth and geometrically connected locally closed subscheme of $D(x)$ and that its $K$-points are dense whenever $K$ is infinite.

    Now the stabiliser $\Stab_{\intAut_K(V, x)}(s_0, n_0) = \intAut_K(V, x, s_0, n_0)$ identifies with the Weil restriction of scalars $\Res_{L/K} \intAut_L(V, n_0)$ and thus is of dimension $\deg f \cdot \curlybr{\varphi}$ by the same argument as before.
    From this we obtain the desired identity $\dim J(x, \varphi) = \deg f \cdot (\curlybr{\inv x} - \curlybr{\varphi})$.
    When $q = 1$ we see that $J(x, \varphi)$ is of dimension $0$ and consequently $J(x, \varphi) \cong \Spec(K)$ and when $q > 1$ the field $K$ is necessarily infinite; in both cases it follows that the $K$-points of $J(x, \varphi)$ are dense.
\end{proof}

\begin{example}
    Suppose that $x$ is already semisimple, i.e.\ we have $\minpol_x = f$.
    Then we have $\inv x = m \cdot [1]$ and consequently $\zeta_q^{-1}(\inv x)$ is the set of all partitions $\varphi \in \Part_m$ with $\varphi_i \leq q$ for all $i$.
    Fixing such $\varphi \in \zeta_q^{-1}(\inv x)$, giving a Jordan-Chevalley decomposition $x = s + n$ of type $\varphi$ then is equivalent to giving the nilpotent $L$-linear endomorphism $n$ of $V$ of type $\varphi$, where we give $V$ the $L$-vector space structure induced by $x$ (which makes sense by semisimplicity).

    In fact, we have an isomorphism
    \[
        J(x, \varphi) \, \to \, \Res_{L/K} N(V, \varphi) \, , \qquad (s, n) \, \mapsto \, n \, ,
    \]
    where $N(V, \varphi) \subseteq \affspace(\End_L(V))$ denotes the $\intAut_L(V)$-orbit of nilpotent endomorphisms of type $\varphi$.

    As a concrete example, in the simplest non-trivial case where $m = 2$ and $\varphi = [2]$ and after choosing an $L$-basis of $V$ we have
    \[
        N(V, \varphi) \, = \, \set[\Big]{\begin{psmallmatrix} a & b \\ c & -a \end{psmallmatrix}}{a^2 + bc = 0} \setminus \curlybr{0} \, \subseteq \, \Mat_{2, L} \, .
    \]
\end{example}

\begin{example}
    Suppose that $\deg f = q = 2$ and consider $(V, x)$ where $V$ is of dimension $6 \cdot 2 = 12$.
    Here is a table that lists the partitions $\psi$ of the integer $6$, their preimages $\varphi$ under $\zeta_2$ and the corresponding dimensions of $J(x, \varphi)$.
    \begin{center}
    \begin{tabular}{c | l}
        $\psi$
        & $\varphi \colon \dim$
        \\
        \hline\hline
        $[1, 1, 1, 1, 1, 1]$
        & $[1, 1, 1, 1, 1, 1] \colon 0 \, , \, [2, 1, 1, 1, 1] \colon 20 \, , \, [2, 2, 1, 1] \colon 32 \, , \, [2, 2, 2] \colon 36$
        \\
        $[2, 1, 1, 1, 1]$
        & $[3, 1, 1, 1] \colon 16 \, , \, [3, 2, 1] \colon 24$
        \\
        $[2, 2, 1, 1]$
        & $[3, 3] \colon 16 \, , \, [4, 1, 1] \colon 16 \, , \, [4, 2] \colon 20$
        \\
        $[2, 2, 2]$
        &
        \\
        $[3, 1, 1, 1]$
        &
        \\
        $[3, 2, 1]$
        & $[5, 1] \colon 12$
        \\
        $[3, 3]$
        & $[6] \colon 12$
        \\
        $[4, 1, 1]$
        &
        \\
        $[4, 2]$
        &
        \\
        $[5, 1]$
        &
        \\
        $[6]$
        &
    \end{tabular}
    \end{center}
\end{example}

\printbibliography

\end{document}